\theoremstyle{plain}
\newtheorem{theorem}{Theorem}
\newtheorem{question}{Question}
\theoremstyle{definition}
\theoremstyle{remark}
\newtheorem{remark}{Remark}
\newenvironment{que}
    {\begin{tcolorbox}[colback=cyan]\begin{question}
    }
    { 
    \end{question}\end{tcolorbox}
    }
\newcommand{\beql}[1]{\begin{equation}\label{#1}}
\newcommand{\eeq}{\end{equation}}
\newcommand{\Abs}[1]{{\left|{#1}\right|}}
\newcommand{\Ceil}[1]{{\left\lceil{#1}\right\rceil}}
\newcommand{\Set}[1]{{\left\{{#1}\right\}}}
\newcommand{\RR}{{\mathbb R}}
\newcommand{\ZZ}{{\mathbb Z}}
\newcommand{\QQ}{{\mathbb Q}}
\newcommand{\one}{{\mathds{1}}}
\newcommand{\supp}{{\rm supp\,}}
\newcommand{\vol}{{\rm vol\,}}
\newcommand{\ft}[1]{\widehat{#1}}
\newcommand{\diam}{{\rm diam\,}}
\newcommand{\qq}[1]{{{\begin{que} {#1} \end{que}}}}
\newcounter{rem}
\newcounter{mysec}
\def\themysec{\arabic{mysec}}
\newcommand{\mysection}[1]{
  \vskip 0.25in
  \refstepcounter{mysec}
  \addcontentsline{toc}{section}{\themysec.\ {#1}}
  \noindent{\bf \S\themysec.\ {#1}}
}
\begin{document}

\title{Simultaneous tiling}

\author{Mihail N. Kolountzakis}
\address{Department of Mathematics and Applied Mathematics, University of Crete, Voutes Campus, 70013 Heraklion, Crete, Greece.}
\email{kolount@gmail.com}

\subjclass[2020]{primary 52C22;  secondary 20K99}
\keywords{Lattices, tiling, common fundamental domain, Steinhaus problem}

\date{\today}

\thanks{Supported by the Hellenic Foundation for Research and Innovation, Project HFRI-FM17-1733 and by grant No 4725 of the University of Crete.}

\begin{abstract}
We discuss problems of simultaneous tiling. This means that we have an object (set, function) which tiles space with two or more different sets of translations. The most famous problem of this type is the Steinhaus problem which asks for a set simultaneously tiling the plane with all rotates of the integer lattice as translation sets.
\end{abstract}

\maketitle

\centerline{\em Dedicated to the memory of Dimitris Gatzouras}

\tableofcontents

\setlength{\parskip}{0.5em}

\sloppy

\mysection{Tiling by translation.}
For the purposes of this paper \footnote{This is a survey paper which contains no new results. Many of the results and questions in this paper are from \cite{kolountzakis2021functions} on parts of which this paper is heavily based.} tiling is by translation only \cite{kolountzakis2004milano}. We have an object $T$ (the \textit{tile}) which may be a set or a function on some abelian group $G$ (usually the Euclidean space but it may be $\ZZ^d$ or a finite group)
\begin{figure}[h]
\centerline{\includegraphics[width=0.2\linewidth]{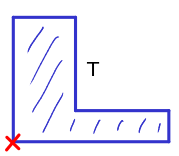}}
\caption{An $L$-shaped tile. The red point is the origin}
\end{figure}
which we are translating around by a set of translations $\Lambda$, in such a way that everything in the group $G$ is covered exactly once, with the possible exception of a set of zero Haar measure, to account for such irrelevant things such as boundaries overlapping, which we generally do not care about.
\begin{figure}[h]
{\includegraphics[width=0.4\textwidth]{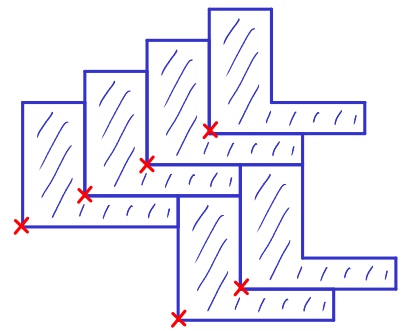}}
\caption{A translational tiling by the $L$-shaped region. The red points are the translation set.}
\end{figure}
One convenient way to define tiling by a function $f$ (which can be an indicator function, if we want tiling by a set) when translated at the locations $\Lambda$ is to demand that
\beql{tiling}
\sum_{\lambda\in\Lambda} f(x-\lambda) = \text{const.},
\eeq
for almost all $x \in G$. To avoid most issues of convergence it makes sense to ask that $f \ge 0$, though some interesting problems do arise with signed $f$ \cite{kolountzakis2021tiling}.

\mysection{Tiling in Fourier space.} It is easy to see that \eqref{tiling} may be rewritten as a convolution
\beql{tiling-conv}
f*\delta_{\Lambda} = \text{const.}
\eeq
where $\delta_\Lambda = \sum_{\lambda\in\Lambda} \delta_\lambda$ is the measure that encodes the locations $\Lambda$ by placing a unit mass on each of them.
\begin{figure}[h]
\includegraphics[width=0.2\linewidth]{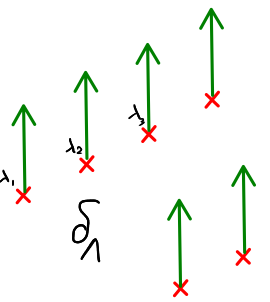}
\caption{The collection $\delta_\Lambda$ of point masses that encodes the set $\Lambda$}
\end{figure}
Taking the Fourier Transform of this we obtain
\beql{tiling-ft}
\ft{f} \ft{\delta_\Lambda} = C \delta_0.
\eeq
This implies that the tempered distribution $\ft{\delta_\Lambda}$ is supported on the zeros of $\ft{f}$ plus the origin
\beql{tiling-ft-support}
\supp\ft{\delta_\Lambda} \subseteq \Set{\ft{f} = 0} \cup \Set{0}.
\eeq
Let us now restrict ourselves to the case of $G = \RR^d$ and $\Lambda \subseteq \RR^d$ being a lattice $\Lambda = A \ZZ^d$, where $A$ is a non-singular $d\times d$ matrix. The Poisson summation formula reads
$$
\ft{\delta_\Lambda} = \frac{1}{\Abs{\det A}}\delta_{\Lambda^*}
$$
in this case, where $\Lambda^* = A^{-\top}\ZZ^d$ is the dual lattice of $\Lambda$, so the tiling of $f$ with $\Lambda$ becomes equivalent to
$$
\ft{f}(\lambda^*) = 0\ \ \text{ for all } \lambda^* \in \Lambda^*\setminus\Set{0}.
$$
\begin{figure}[h]
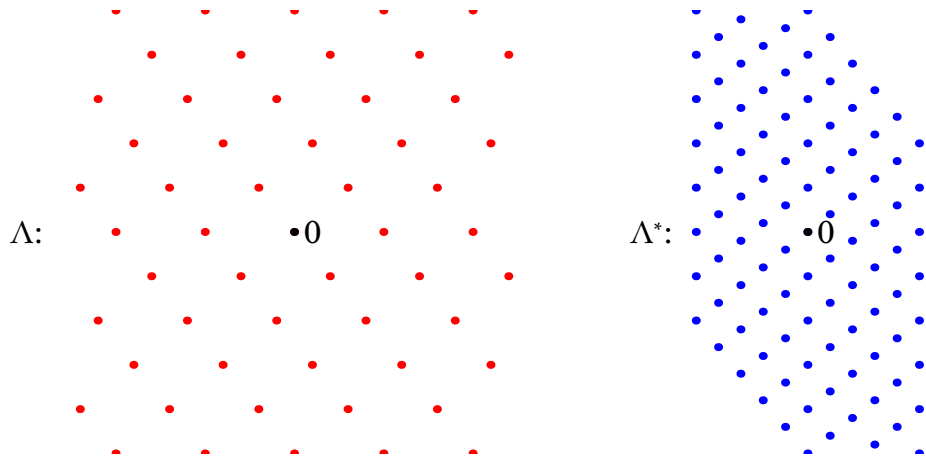

\begin{asy}
size(12cm);

picture pic, picd;
real[][] a={{2, 1.2}, {0, 1}};
real[][] b=transpose(inverse(a));

int N=5;

for(int i=-N; i<=N; ++i)
 for(int j=-N; j<=N; ++j) {
  real[] x={i, j};
  real[] y=a*x;
  pair p=(y[0], y[1]);
  dot(pic, p, red);
  y = b*x;
  pair p=(y[0], y[1]);
  dot(picd, p, blue);
 }

dot(pic, "0", (0, 0), black);
dot(picd, "0", (0, 0), black);

clip(pic, (-N, -N)--(N, -N)--(N, N)--(-N, N)--cycle);
label(pic, "$\Lambda$:", (-1.2*N, 0));
clip(picd, (-N, -N)--(N, -N)--(N, N)--(-N, N)--cycle);
label(picd, "$\Lambda^*$:", (-0.7*N, 0));
add(pic);
add(shift((2.3*N, 0))*picd);
\end{asy}

\caption{A lattice $\Lambda$ and its dual $\Lambda^*$}
\end{figure}
\mysection{The Steinhaus tiling problem.} In the \textit{Steinhaus tiling problem} we are seeking a tile that can tile {\em simultaneously} with many different sets of translations. The most important case is: can we find a subset of the plane which can tile (by translations) with all rotates of the integer lattice $\ZZ^2$? In some sense we are asking for a set in the plane that can behave simultaneously like all these rotated squares (Fig.\ \ref{rotates}).
\begin{figure}[h]
\includegraphics[width=0.3\linewidth]{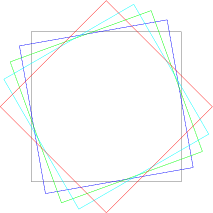}
\caption{The rotated squares are fundamental domains of all rotates of $\ZZ^2$}
\label{rotates}
\end{figure}
There are two major variations of the Steinhaus problem: the \textit{measurable} and the \textit{set-theoretic} case. In the measurable case we demand our tile to be a Lebesgue measurable subset of $\RR^d$ and we are, at the same time, relaxing our requirements and are allowing a subset of measure 0 of space not to be covered exactly once by the translates of the tile. In the set-theoretic case we allow the tile to by any subset and we typically ask that every point is covered exactly once, allowing no exceptions.

Komj\'ath \cite{komjath1992lattice} answered the Steinhaus question in the affirmative in $\RR^2$ when tiling by all rotates of the set $B=\ZZ\times\Set{0}$
showing that there are such Steinhaus sets (but such a set $A$ cannot be measurable as was shown
recently in \cite{kolountzakis2017measurable}).
Sierpinski \cite{sierpinski1958probleme} showed that a bounded set $A$ which is either closed or open cannot have the
lattice Steinhaus property (that is, intersect all rigid motions of $\ZZ^2$ at exactly one point -- another way to say that $A$ tiles precisely with all rotates of $\ZZ^2$).
Croft \cite{croft1982three} and Beck \cite{beck1989lattice} showed that no bounded and measurable set $A$ can
have the lattice Steinhaus property (but see also \cite{mallinikova1995}).
Kolountzakis \cite{kolountzakis1996problem,kolountzakis1996new} and Kolountzakis and Wolff \cite{kolountzakis1999steinhaus}
proved that any measurable set in the plane that has the measurable Steinhaus property must necessarily
have very slow decay at infinity (any such set must have measure 1). In \cite{kolountzakis1999steinhaus}
it was also shown that there can be no measurable Steinhaus sets in dimension $d\ge 3$ (tiling with all rotates $\rho\ZZ^d$, where $\rho$ is in the full orthogonal group)
a fact that was also shown later by Kolountzakis and Papadimitrakis \cite{kolountzakis2002steinhaus} by a very different method.
See also \cite{chan2007steinhaus,mauldin2003comments,ciucu1996remark,srivastava2005steinhaus}. Kolountzakis \cite{kolountzakis1997multi} looks at the case where
we are only asking for our set to tile with {\em finitely many} lattices, not all rotates as in the original problem, which we are also doing in this paper.
In a major result Jackson and Mauldin \cite{jackson2002sets,jackson2002lattice} proved the existence of Steinhaus sets in the plane which tile with all rotates of $\ZZ^2$
(not necessarily measurable). Their method does not extend to higher dimension $d \ge 3$. See also \cite{mauldin2001some,jackson2003survey}.
It was also shown in \cite{kolountzakis2017measurable} that a set $A$ which tiles with all rotates of a finite set $B$ cannot be measurable.

\mysection{The Steinhaus problem in Fourier space.} Most of the results on the measurable Steinhaus problem start by observing that if $E \subseteq \RR^2$ is Steinhaus then every rotate $R_\theta E$ of $E$ tiles with $\ZZ^2$, which means that for every angle $\theta$ the Fourier Transform $\ft{\one_{R_\theta E}}$ vanishes on $\ZZ^2 \setminus\Set{0}$ since $\ZZ^2$ is the dual lattice of itself. This implies that $\ft{\one_E}$ vanishes on all rotates of $\ZZ^2$. In other words $\ft{\one_E}$ vanishes on all circles centered at the origin that go through at least one integer lattice point.
\begin{figure}[h]
\resizebox{7cm}{!}{\input 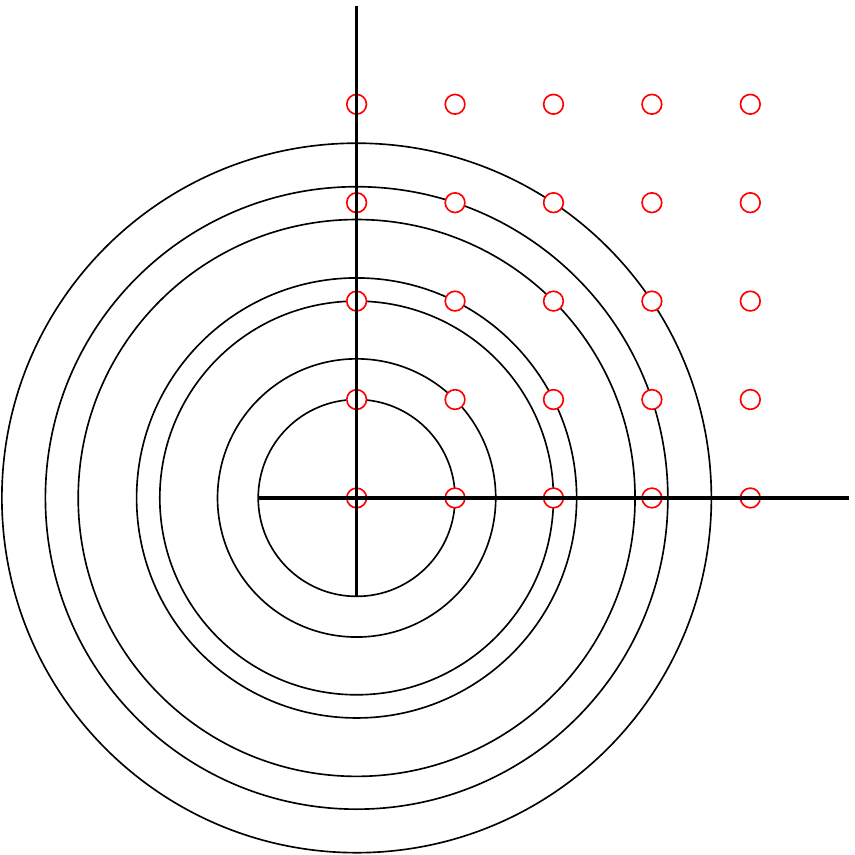_t}
\caption{The Fourier Transform of any Steinhaus set must vanish on all circles centered at the origin that go through at least one integer lattice point}
\end{figure}
The number of these circles is large. There are a little less than $O(R^2)$ such circles of radius $\le R$. Many zeros of a function sometimes imply decay at infinity, and, by the usual uncertaintly principle (both $f$ and $\ft{f}$ cannot decay fast at infinity), since $\ft{\one_E}$ is small at infinity it follows that $\one_E$ is large (e.g., $E$ cannot be bounded).

\mysection{Allowing functions instead of sets in the Steinhaus problem.} Let us now relax our requirements and allow our tile to be a function instead of a set (instead of indicator function, in other words).
Satisfying the requirements of the Steinhaus tiling problem with a function is generally much easier than with a set. The problem becomes interesting only if one asks for further properties that this function should have. Therefore we try to find a function with {\em small support}, or to prove that the support of such a function must necessarily be large. Asking for $f$ to have a small support goes against $f$ having the ability to tile space, especially with many different sets of translations $T$. The reason is that for $f$ to tile by translations with $T$ its Fourier transform must contain a rich set of zeros \cite{kolountzakis2004milano}. This set of zeros must be able to support the Fourier transform of the measure $\delta_T = \sum_{t \in T} \delta_t$ (which encodes the set of translations). By the well known uncertainty principle in harmonic analysis a rich set of zeros for $\ft{f}$ usually requires (in various different senses) a large support for $f$ \cite{havin1994uncertainty}.

It is very easy to take $\ft{f}$ to vanish on the required circles, but one must do it in a way that ensures that $f$ is itself {\em small} in some sense, such as the diameter of its support or the volume of its support.

\mysection{Small diameter of the support: lower bounds.}
The first thing that comes to mind is to take $f$ to be a convolution. It takes a moment to verify that if $f$ tiles with a set of translates $T$ then so does $g*f$ for any $g \in L^1(\RR^d)$. One can either verify this by checking the definition of tiling for $g*f$ or observe that tiling is a condition that can be checked on the Fourier side \cite{kolountzakis2004milano} and $\ft{g*f} = \ft{g} \cdot \ft{f}$ has an even richer set of zeros that $\ft{f}$.

So, since $\ft{f}$ has to vanish on the dual lattices $\Lambda_i^*\setminus\Set{0}$ we can take
\beql{convolution}
f = \one_{D_1} * \one_{D_2} * \cdots * \one_{D_N},
\eeq
where $D_i$ is a fundamental parallelepiped of $\Lambda_i$. Since $D_i + \Lambda_i$ is a tiling it follows that $\widehat{\one_{D_i}}$ vanishes on $\Lambda_i^*\setminus\Set{0}$ and that $f$ vanishes on their union and hence tiles with all $\Lambda_i$. This can be slightly generalized by taking, instead if the indicator functions $\one_{D_i}$ any function $f_i$ that tiles with $\Lambda_i$
\beql{convolution-of-functions}
f = f_1 * f_2 * \cdots f_N.
\eeq

\begin{figure}[h]
\includegraphics[width=0.4\linewidth]{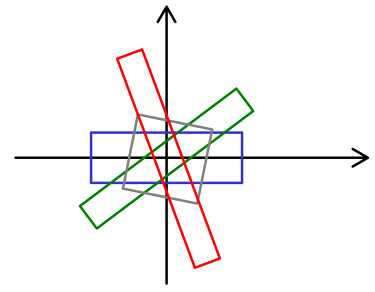}
\caption{The fundamental doamins of several lattices. A constant fraction of them project to a set of large diameter onto one of the coordinate axes.}
\label{fig:rotated-fds}
\end{figure}
The following observation (see detailed proof in \cite{kolountzakis2021functions}) was already made in \cite{kolountzakis1999steinhaus} in the case $f_i = \one_{D_i}$.
\begin{theorem}\label{th:convolution-lb}
If $\Lambda_1, \ldots, \Lambda_N$ are lattices in $\RR^d$ of volume $c_1 \le \vol\Lambda_i$ and $f = f_1 * f_2 * \cdots * f_N$ then
\beql{convolution-lower-bound}
\diam\supp f \ge C_d N.
\eeq
\end{theorem}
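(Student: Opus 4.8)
The plan is to bound the $d$-dimensional diameter of $\supp f$ from below by a one-dimensional quantity — the length of the shadow of $\supp f$ on a suitably chosen coordinate axis — and then to show that each tile $f_i$ is forced to cast a long shadow on at least one axis. Throughout I assume $f\not\equiv 0$ and, as is standard for tiles, $f_1,\dots,f_N\ge 0$.

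First I would record the elementary identity $\supp(g*h)=\overline{\supp g+\supp h}$, valid for nonnegative locally integrable $g,h$: the inclusion $\subseteq$ is immediate, and for $\supseteq$ one notes that if $a\in\supp g$ and $b\in\supp h$ then $\int_{B(a+b,\varepsilon)}(g*h)\ge\big(\int_{B(a,\varepsilon/2)}g\big)\big(\int_{B(b,\varepsilon/2)}h\big)>0$, so $a+b\in\supp(g*h)$. Iterating gives $\supp f=\overline{\supp f_1+\cdots+\supp f_N}$. Writing $\pi_j\colon\RR^d\to\RR$ for the projection onto the $j$-th coordinate axis, and using that $\pi_j$ is linear and $1$-Lipschitz, that a coordinate projection commutes with Minkowski sums, and that in $\RR$ the diameter of a Minkowski sum of nonempty sets is the sum of the diameters, I obtain, for every $j\in\{1,\dots,d\}$,
$$
\diam\supp f\ \ge\ \diam\pi_j(\supp f)\ =\ \sum_{i=1}^{N}\diam\pi_j(\supp f_i).
$$

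Next I would show that each $\supp f_i$ has Lebesgue measure at least $\vol\Lambda_i\ge c_1$. Since $f_i\ge 0$ tiles $\RR^d$ with $\Lambda_i$ at some level $c>0$, for almost every $x$ there is $\lambda\in\Lambda_i$ with $f_i(x-\lambda)>0$; hence $\RR^d=\bigcup_{\lambda\in\Lambda_i}(\lambda+\supp f_i)$ up to a null set, and intersecting with a fundamental parallelepiped $P$ of $\Lambda_i$ and summing over its $\Lambda_i$-translates (which partition $\RR^d$) gives $\vol\Lambda_i=\vol P\le\vol\supp f_i$. On the other hand, any $S\subseteq\RR^d$ of finite positive measure satisfies $S\subseteq\prod_{j=1}^{d}\pi_j(S)$, so $\vol S\le\prod_{j}\diam\pi_j(S)\le\big(\max_j\diam\pi_j(S)\big)^{d}$; applied with $S=\supp f_i$ this produces an index $j(i)\in\{1,\dots,d\}$ with $\diam\pi_{j(i)}(\supp f_i)\ge(\vol\Lambda_i)^{1/d}\ge c_1^{1/d}$.

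Finally, a pigeonhole over the $d$ axes: some $j_0$ equals $j(i)$ for at least $N/d$ indices $i$, say for $i$ in a set $I$. Substituting $j=j_0$ in the first display and keeping only the terms with $i\in I$,
$$
\diam\supp f\ \ge\ \sum_{i\in I}\diam\pi_{j_0}(\supp f_i)\ \ge\ |I|\,c_1^{1/d}\ \ge\ \frac{c_1^{1/d}}{d}\,N,
$$
which is \eqref{convolution-lower-bound} with $C_d=c_1^{1/d}/d$ (the constant thus depends on $d$ and on the assumed covolume lower bound $c_1$). I expect the only step needing real care to be the support identity for convolutions of nonnegative functions; once that is in place, what remains is just the two one-line geometric facts — superadditivity of diameters under projected Minkowski sums in $\RR$, and that a body of a given volume must cast a long shadow on one of the coordinate axes — together with the pigeonhole, so the main obstacle is really only the bookkeeping about null sets in the measure estimate rather than any genuine difficulty.
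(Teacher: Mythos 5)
Your argument is correct and is essentially the paper's own: the support of the convolution contains the Minkowski sum of the supports of the $f_i$, each $\supp f_i$ has measure at least $\vol\Lambda_i\ge c_1$ (from tiling) and hence casts a shadow of length at least $c_1^{1/d}$ on some coordinate axis, and a pigeonhole over the $d$ axes gives the linear lower bound. The only caveat is your standing assumption $f_i\ge 0$: for signed $f_i$ the support identity for convolutions fails and, as the paper notes, one must invoke the Titchmarsh convolution theorem instead.
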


The reason is that a constant fraction of the supports of the $f_i$ project onto a constant fraction of their diameter onto \textit{some} line, say one of the axes. This implies (obvious if the $f_i$ are nonnegative; one needs the Titchmarsh convolution theorem in the general case) that so does the support of the convolution $f = f_1*\cdots*f_N$ (shown in Fig.\ \ref{fig:rotated-fds} for the $f_i$ being the indicator functions of fundamental parallelepipeds of the lattices).

If the lattices $\Lambda_i$ satisfy some ``roundness'' assumption, e.g. if each $\Lambda_i$ is assumed to have a fundamental domain of diameter bounded independent of $N$ (as in the important case when all the lattices are rotates of $\ZZ^d$), then the convolution tile \eqref{convolution} has diameter which is also at most $C\cdot N$.

On the other hand we have the following rather general lower bound for the diameter of the support \cite{kolountzakis1999steinhaus} assuming only a certain genericity assumption \eqref{no-intersection} on the $\Lambda_i$.
\begin{theorem}\label{kw-lb}
If $\Lambda_1, \ldots, \Lambda_N \subseteq \RR^d$, $d \ge 1$, are lattices of volume equal to 1 such that
\beql{no-intersection}
\Lambda_i \cap \Lambda_j = \Set{0}\ \  \text{ for all $i \neq j$,}
\eeq
then if $f$ tiles with all these lattices we have
\beql{lb}
\diam\supp f \ge C_d N^{1/d}.
\eeq
\end{theorem}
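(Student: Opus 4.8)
\emph{The plan} is to pass to the Fourier side, where the whole hypothesis becomes a statement about the zero set of a band‑limited function, and then to apply a density bound of Landau type. After translating $f$ (which affects neither $\diam\supp f$ nor its tiling properties) we may assume $\supp f\subseteq B(0,D)$ with $D:=\diam\supp f$, since $\supp f$ lies in the ball of radius $D$ about any of its own points. Because $\vol\Lambda_i=1$ (so $\vol\Lambda_i^*=1$ as well), tiling of $f$ with $\Lambda_i$ is equivalent to $\ft f$ vanishing on $\Lambda_i^*\setminus\Set0$; hence $\ft f$ vanishes on $\Xi\setminus\Set0$, where $\Xi:=\bigcup_{i=1}^N\Lambda_i^*$. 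On the other hand, $\supp f\subseteq B(0,D)$ forces $\ft f$ to be an entire function on $\CC^d$ of exponential type $\le 2\pi D$, lying in $L^2(\RR^d)$ by Plancherel and not identically $0$: a non‑zero element of the Paley--Wiener space $PW_{B(0,D)}$.

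I would then compute the Beurling density of $\Xi$ and show it equals $N$. Each $\Lambda_i^*$ has covolume $1$, hence density $1$. For $i\ne j$ the genericity hypothesis $\Lambda_i\cap\Lambda_j=\Set0$, combined with the fact that the $\Lambda_i$ are full‑rank, forces $\Lambda_i+\Lambda_j$ to be \emph{non‑discrete}: were it discrete it would be a lattice having $\Lambda_i$ and $\Lambda_j$ as finite‑index sublattices, so $\Lambda_i\cap\Lambda_j$ would have finite index in it and hence be of full rank, a contradiction. Therefore $\Lambda_i^*\cap\Lambda_j^*$ — the annihilator of the closed subgroup $\overline{\Lambda_i+\Lambda_j}$ — is a discrete group of rank $<d$, and so of density $0$. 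Since the pairwise intersections have density $0$ it follows that $\dens\Xi=\sum_{i}\dens\Lambda_i^*=N$ (and deleting the single point $0$ does not change this); in fact $\Xi$ is uniformly distributed of density $N$.

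The conclusion then follows from the density theorem of Landau: a uniformly distributed set of Beurling density exceeding $\omega_d D^d$, the volume of $B(0,D)$ (here $\omega_d:=\vol B(0,1)$), is a set of uniqueness for $PW_{B(0,D)}$ — equivalently, no non‑zero function of $PW_{B(0,D)}$ can vanish on it. For the very regular set $\Xi$ at hand this can also be obtained directly, by estimating the trace of the time--band‑limiting (concentration) operator, which is essentially Landau's own method. Since $\ft f\ne 0$ vanishes on $\Xi\setminus\Set0$, we must have $N=\dens\Xi\le\omega_d D^d$, that is $D\ge(N/\omega_d)^{1/d}$, which is \eqref{lb} with $C_d=\omega_d^{-1/d}$.

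I expect the last step to be the main obstacle. The elementary facts about zeros of band‑limited functions are too weak on their own: Bernstein's inequality only shows that $\ft f$ is comparable to its supremum on any ball of radius $\gtrsim 1/D$ about a zero, hence that $\Xi$ must leave a hole of radius $\gtrsim 1/D$ — but a union of many lattices can perfectly well have an arbitrarily large common hole, so this bounds nothing. Likewise the exact per‑lattice identities coming from Poisson summation (for instance $\sum_{\mu\in\Lambda_i}\int f(x)\overline{f(x-\mu)}\,dx=|\int f|^2$ for every $i$) are true, but summing them over $i$ merely cancels the factor $N$ from both sides and yields nothing. What genuinely makes $N$ enter the estimate is that $\Xi$ is too \emph{dense} — not merely too gap‑free — for a non‑zero band‑limited $\ft f$ to vanish on it, and making this quantitative is exactly a Landau‑type density theorem.
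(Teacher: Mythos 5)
The paper itself does not prove Theorem \ref{kw-lb}; it only cites \cite{kolountzakis1999steinhaus}, so your proposal has to stand on its own. Your reduction to the Fourier side is fine, and so is the density computation: the hypothesis $\Lambda_i\cap\Lambda_j=\Set{0}$ does force $\Lambda_i^*\cap\Lambda_j^*$ to have rank $<d$, hence density $0$, so $\Xi=\bigcup_i\Lambda_i^*$ has density $N$. The problem is the final step. The ``Landau density theorem'' you invoke --- that a (uniformly distributed) set of Beurling density exceeding $\Abs{B(0,D)}$ is a set of uniqueness for $PW_{B(0,D)}$ --- is simply not true in dimension $d\ge 2$. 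Landau's theorems give \emph{necessary} density conditions for sampling and interpolation; they do not say that high density implies uniqueness, and his trace/concentration-operator argument needs the stable sampling inequality, not mere injectivity, so it cannot be run in the direction you want. A concrete counterexample: $\ft{f}$ for $f=\one_{[0,1]^d}$ vanishes on the union of hyperplanes $\Set{\xi_k\in\ZZ\setminus\Set{0}}$, and inside that zero variety one can place a uniformly discrete, relatively dense set of arbitrarily large Beurling density; so no theorem of the form ``density $>\Abs{B(0,D)}$ $\Rightarrow$ uniqueness for $PW_{B(0,D)}$'' can hold for $d\ge2$. In other words, for $d\ge 2$ the fact that $\Xi$ is a union of $N$ \emph{full lattices} (and not just any dense set) must enter the proof in an essential way --- e.g.\ through Poisson summation applied lattice by lattice to the autocorrelation $f*\tilde f$, which is how the argument in \cite{kolountzakis1999steinhaus} exploits the structure --- and your proposal never uses that structure beyond computing a density.

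In dimension $d=1$ your argument is correct and complete: there Jensen's formula does give exactly the statement you need (a nonzero function with spectrum in an interval of length $D$ has real zeros of density at most $D$), and this is the same mechanism the paper uses in its proofs of Theorems \ref{th:long-common-tiles} and \ref{th:many-relations-1d}. Two small technical points even there: $f\in L^1$ with compact support need not lie in $L^2$, so it is cleaner to work with $f*\tilde f$ (or just with the fact that $\ft f$ is entire of exponential type and bounded on $\RR^d$) than to appeal to Plancherel; and when you pass from $\dens\Xi=N$ to the zero-counting estimate you should note that the Beurling lower density is what Jensen controls, so the asymptotic nature of the density of $\bigcup_i\Lambda_i^*$ is harmless. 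But as written, the proof establishes only the case $d=1$.
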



The main question is therefore:

\qq{
Can the gap between the lower bound \eqref{lb} and the linear upper bound $O(N)$ achievable by the convolution tile \eqref{convolution} (in the case of ``round'' lattices, having fundamental domains bounded in diameter by a constant) be bridged?

Are there examples of lattices $\Lambda_i$, $i=1,2,\ldots,N$, satisfying \eqref{no-intersection} and a non-zero function $f \in L^1(\RR^d)$ that tiles with all $\Lambda_i$ and such that
$$
\diam\supp f = o(N)?
$$

In other words, do there exist collections of lattices for which a common tile $f$ can be found which is diameter-wise more efficient than the convolution construction \eqref{convolution}?
}

\mysection{A case of large diameter.} We observe now \cite{kolountzakis2021functions} that for some collections of lattices the linear upper bound cannot be improved.
The lattices given are both ``round'' (have a fundamental domain bounded independent of $N$) and satisfy the genericity assumption \eqref{no-intersection}. There are however \textit{collinearities} so, in some sense, this is not a generic situtation. 
\begin{theorem}\label{th:long-common-tiles}
For $d \ge 1$ and for each $N$ there are lattices $\Lambda_1, \ldots, \Lambda_N \subseteq \RR^d$, of volume 1, such that if $f \in L^1(\RR^d)$, $\int f \neq 0$, tiles with all of them then
$$
\diam\supp f \ge C_d N.
$$
\end{theorem}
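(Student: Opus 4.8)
The plan is to build the lattices so that every \emph{dual} lattice $\Lambda_k^\ast$ carries a long arithmetic progression along one fixed coordinate axis, with spacings that are bounded but pairwise incommensurable. A common tile then projects onto that axis to a compactly supported function on $\RR$ which is forced to tile simultaneously with $N$ distinct one-dimensional lattices, and a Paley--Wiener/Jensen zero-counting estimate turns that into the linear lower bound. Concretely, I would fix $d\ge2$ (for $d=1$ the only lattice of volume one is $\ZZ$, so the statement should be read with $d\ge 2$), choose reals $\gamma_1,\dots,\gamma_N\in[1,2]$ with $\gamma_i/\gamma_j\notin\QQ$ whenever $i\ne j$ — only countably many configurations are excluded — and set
\[
\Lambda_k=\tfrac1{\gamma_k}\ZZ\times\gamma_k^{1/(d-1)}\ZZ\times\cdots\times\gamma_k^{1/(d-1)}\ZZ,
\]
with $d-1$ equal factors after the first. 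Then $\vol\Lambda_k=1$; all side lengths lie in $[\tfrac12,2]$, so $\Lambda_k$ has a fundamental parallelepiped of diameter $\le C_d$ (it is ``round''); and any $v\in\Lambda_i\cap\Lambda_j$ has first coordinate in $\tfrac1{\gamma_i}\ZZ\cap\tfrac1{\gamma_j}\ZZ=\{0\}$, forcing $v=0$, so \eqref{no-intersection} holds. Since $\Lambda_k^\ast=\gamma_k\ZZ\times\gamma_k^{-1/(d-1)}\ZZ\times\cdots$, we have $\gamma_k m\,e_1\in\Lambda_k^\ast$ for all $m\in\ZZ$, where $e_1=(1,0,\dots,0)$.

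Now suppose $f\in L^1(\RR^d)$ with $\int f\ne0$ tiles with every $\Lambda_k$. By the Fourier characterisation of lattice tiling recalled above, $\ft f$ vanishes on each $\Lambda_k^\ast\setminus\{0\}$ while $\ft f(0)=\int f\ne0$. Put $g(x)=\int_{\RR^{d-1}}f(x,y)\,dy\in L^1(\RR)$; then $\ft g(\xi)=\ft f(\xi,0,\dots,0)$, so $\ft g(0)\ne0$ and $\ft g(\gamma_k m)=0$ for every $k$ and every $m\in\ZZ\setminus\{0\}$. As $\supp g$ lies in the projection of $\supp f$ to the first axis, $\diam\supp g\le\diam\supp f$, so it suffices to bound $\diam\supp g$ below; we may assume $D:=\diam\supp g<\infty$, and (translating $f$, which only multiplies $\ft g$ by a zero-free exponential and changes no diameters) that $\supp g\subseteq[-D/2,D/2]$. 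Then $\ft g$ is entire with $|\ft g(\zeta)|\le\|g\|_1 e^{\pi D|\Im\zeta|}$, and $\ft g\not\equiv0$ because $\ft g(0)\ne0$.

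The final step is to count zeros. By incommensurability the sets $\gamma_k\ZZ\setminus\{0\}$ are pairwise disjoint, so for large $S$ the number of real zeros of $\ft g$ in $(-S,S)$ is at least $\sum_{k=1}^N\#\bigl(\gamma_k\ZZ\cap(-S,S)\bigr)-O(N)\ge\sum_{k=1}^N\tfrac{2S}{\gamma_k}-O(N)\ge NS-O(N)$, using $\gamma_k\le2$. On the other hand, Jensen's formula for $\ft g$ on the disk $\{|\zeta|\le2S\}$, together with the exponential-type bound and $\ft g(0)\ne0$, shows that $\ft g$ has at most $\tfrac1{\log2}\bigl(2\pi D S+C_0\bigr)$ zeros in $\{|\zeta|\le S\}$ for some $C_0$ independent of $S$. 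Dividing by $S$ and letting $S\to\infty$ gives $N\le\tfrac{2\pi}{\log2}\,D$, i.e.\ $\diam\supp f\ge\diam\supp g\ge\tfrac{\log2}{2\pi}\,N$, which is the assertion (and, since the convolution tile $\one_{D_1}\ast\cdots\ast\one_{D_N}$ has diameter $O_d(N)$, this is sharp up to the constant).

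The main obstacle, I expect, is squaring the three demands on the $\Lambda_k$ — volume one, roundness, and pairwise trivial intersection — with the need for the dual lattices to share a direction \emph{densely}: roundness caps each single $\Lambda_k^\ast$ at $O(1)$ lattice points per unit length on any line, so a density of $\sim N$ zeros per unit length along $e_1$ can only be manufactured from $N$ \emph{different} lattices each contributing its $O(1)$, and it genuinely amounts to $\sim N$ (rather than $O(1)$) precisely because the incommensurable $\gamma_k$ keep the $N$ progressions from collapsing onto one another — this is exactly the ``collinearity'' that makes the configuration special. The other delicate point, really the engine of the proof, is the quantitative unique-continuation estimate itself: one must invoke the Paley--Wiener/Jensen inequality bounding the density of real zeros of $\ft g$ by its exponential type (equivalently by $\diam\supp g$), and take care to centre $\supp g$ first, so that the type is $\asymp D$ rather than $\asymp\operatorname{dist}(0,\supp g)$.
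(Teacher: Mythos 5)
Your proof is correct and follows essentially the same route as the paper: both constructions place incommensurable arithmetic progressions of bounded gap from all $N$ dual lattices on a single line, integrate $f$ over the orthogonal directions to get a compactly supported one-variable function whose Fourier transform inherits these $\sim N$ zeros per unit length, and conclude by Jensen's formula. (Only cosmetic remark: your verification of \eqref{no-intersection} — which is not actually needed for the theorem — is slightly too quick, since vanishing of the first coordinate alone does not force $v=0$; one must also note that $(\gamma_i/\gamma_j)^{1/(d-1)}\notin\QQ$ follows from $\gamma_i/\gamma_j\notin\QQ$.)
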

\begin{proof}
We give the proof in the case $d=2$. It works with obvious changes in all dimensions $d > 2$ and it is even easier in dimension $d=1$.
\begin{figure}[h]
\includegraphics[width=0.4\linewidth]{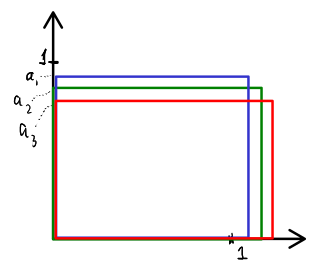}
\caption{The fundamental rectangles of the lattices of Theorem \ref{th:long-common-tiles} which have only very long common tiles.}
\label{fig:aligned}
\end{figure}

Take $\Lambda_i^*$ to be generated by the two vectors
$$
u_i = (0, a_i), v_i = (1/a_i, 0),
$$
where the numbers $a_1, \ldots, a_N$ are linearly independent over $\QQ$ and
$$
0.9 < a_i < 1.
$$
If $f$ tiles with all $\Lambda_i$ then $\ft{f}$ vanishes on all points of the form
$$
(0, k \cdot a_i),\ \ i=1, 2, \ldots, N,\ \ k \in \ZZ\setminus\Set{0}.
$$
Since all these points are different it follows that the density of zeros on the $y$-axis is $\ge C \cdot N$. This implies that
$$
\diam\supp \pi_2(f) \ge C \cdot N
$$
(say, by Jensen's formula) where $\pi_2(f)$ is the one-variable function
$$
\pi_2(f)(y) = \int_{\RR} f(x, y) \,dx.
$$
(This is not an identically zero function by our assumption on the integral of $f$.)
This in turn implies
$$
\diam\supp f \ge C \cdot N.
$$
\end{proof}

\mysection{Small volume of the support.}
Another measure of the size of the support is its volume. Can we construct a common tile $f$ for the lattices $\Lambda_i$ such that $\Abs{\supp f}$ is small?

In the case of $f$ given by \eqref{convolution} it is clear that
$$
\supp{f} = D_1 + D_2 + \ldots + D_N.
$$
To keep things concrete let us assume that all $\Abs{D_i} = 1$ in \eqref{convolution} (unimodular lattices). Then the Brunn-Minkowski inequality \cite{gardner2002brunn} says that
$$
\Abs{\supp f} = \Abs{D_1+\cdots+D_N} \ge \left( \Abs{D_1}^{1/d}+\cdots+\Abs{D_N}^{1/d}\right)^d \ge N^d.
$$
This lower bound
$$
\Abs{\supp f} \ge C N^d
$$
clearly holds also for functions of the form
\beql{convolution-soft}
f = f_1 * f_2 * \cdots * f_N,\ \ \ f_i \ge 0,
\eeq
where for all $i=1,2,\ldots,N$ we assume that the \textit{nonnegative} function $f$ tiles with $\Lambda_i$.

We have proved \cite{kolountzakis2021functions}:
\begin{theorem}\label{th:volume-lb}
For any collection of lattices $\Lambda_1, \ldots, \Lambda_N$ in $\RR^d$ of volume at least 1 and any common tile $f$ for them of the form
$$
f = f_1 * f_2 * \cdots * f_N,\ \ \ f_i \ge 0,
$$
with $f_i$ tiling with $\Lambda_i$, we have
$$
\Abs{\supp f} \ge N^d.
$$
\end{theorem}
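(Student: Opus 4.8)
The plan is to reduce the statement to two ingredients: (a) a lower bound of $1$ for the measure of the positivity set of each factor $f_i$, and (b) the fact that for nonnegative functions the support of a convolution is (the closure of) the Minkowski sum of the supports of the factors. The conclusion is then immediate from the Brunn--Minkowski inequality, exactly as in the case $f_i=\one_{D_i}$ treated above.

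First I would record that each factor is ``genuinely $d$-dimensional''. Integrating the tiling identity $\sum_{\lambda\in\Lambda_i}f_i(x-\lambda)=c_i$ over a fundamental domain of $\Lambda_i$ gives $\int_{\RR^d}f_i=c_i\vol\Lambda_i$; since $f_i\ge 0$ and $f_i\not\equiv 0$ (the latter because $f=f_1*\cdots*f_N$ is assumed to be a tile, hence nonzero), the tiling constant $c_i$ is strictly positive. Consequently, for almost every $x$ there is $\lambda\in\Lambda_i$ with $f_i(x-\lambda)>0$, i.e. the $\Lambda_i$-translates of the set $A_i:=\{f_i>0\}$ cover $\RR^d$ up to a null set. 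Slicing $A_i$ by a fixed fundamental domain $F$ and translating each piece $A_i\cap(F+\lambda)$ back into $F$, one sees that these translates cover $F$ modulo null sets, so $\Abs{A_i}=\sum_\lambda\Abs{(A_i-\lambda)\cap F}\ge\Abs{F}=\vol\Lambda_i\ge 1$; in particular $\Abs{\supp f_i}\ge 1$ (the value $+\infty$ is allowed here and makes the final bound trivial, so it causes no difficulty).

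Next I would handle the support of the convolution. Working with the everywhere-defined nonnegative representative $(g*h)(x)=\int g(y)h(x-y)\,dy\in[0,\infty]$, I claim that if $a$ is a Lebesgue density point of $\{g>0\}$ and $b$ of $\{h>0\}$, then $(g*h)(a+b)>0$: near $a$ both $\{g>0\}$ and the translate $(a+b)-\{h>0\}$ have density $1$, hence their intersection has positive measure, and on it the integrand $y\mapsto g(y)h(a+b-y)$ is positive. Since the density points of a measurable set exhaust it modulo a null set, iterating this gives $B_1+\cdots+B_N\subseteq\{f>0\}$ for measurable sets $B_i\subseteq A_i$ with $\Abs{B_i}=\Abs{A_i}$. (Equivalently one may invoke the standard fact that for nonnegative $L^1$ functions $\supp(g*h)=\overline{\supp g+\supp h}$; the Titchmarsh convolution theorem mentioned earlier is needed only for signed factors.) Finally, since $\{f>0\}$ is measurable and contains $B_1+\cdots+B_N$, the Brunn--Minkowski inequality --- applied, to bypass any measurability issue when the $B_i$ are unbounded, to compact subsets of the $B_i$ of nearly full measure and then passed to the limit --- yields
$$\Abs{\supp f}\ \ge\ \Abs{B_1+\cdots+B_N}\ \ge\ \Bigl(\Abs{B_1}^{1/d}+\cdots+\Abs{B_N}^{1/d}\Bigr)^{d}\ \ge\ N^{d}.$$

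The only genuine obstacle is the measure-theoretic bookkeeping in the two middle steps: ensuring that ``support'' (taken as the positivity set of an explicit representative) behaves correctly under convolution of nonnegative functions, and that Brunn--Minkowski still applies when the relevant Minkowski sum is neither closed nor of finite measure. Both points are routine once one passes to density points and to compact exhaustions, respectively; everything else is a direct computation.
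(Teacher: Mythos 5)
Your proof is correct and follows essentially the same route as the paper: each $\supp f_i$ has measure at least $\vol\Lambda_i\ge 1$ because its $\Lambda_i$-translates cover $\RR^d$, the support of the convolution of nonnegative functions contains the Minkowski sum of the supports, and Brunn--Minkowski finishes. You have merely supplied the measure-theoretic details (density points, compact exhaustion) that the paper leaves implicit when it passes from $f_i=\one_{D_i}$ to general nonnegative $f_i$.
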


But when the functions $f$ are signed (or complex) we only have
$$
\supp{f} \subseteq \supp{f_1}+\cdots\supp{f_N},
$$
not necessarily equality, which brings us to the next question.

\qq{
If $f$ is given by \eqref{convolution-soft}, is it true that
\beql{volume-lb}
\Abs{\supp f} \ge C N^d?
\eeq
}

If one requires that the lattices $\Lambda_1, \Lambda_2, ..., \Lambda_N\subset \mathbb{R}^d$ have the same volume, say $1$, and the sum $\Lambda_1^{\ast}+ \Lambda_2^{\ast}+ ...+ \Lambda_N^{\ast}$ of their dual lattices is direct, then, by \cite[Theorem 2]{kolountzakis1997multi}, they possess a measurable common almost fundamental domain $E$ (generally unbounded).See Fig.\ \ref{fig:multi-construction}.
\begin{figure}[h]
\resizebox{9cm}{!}{\input 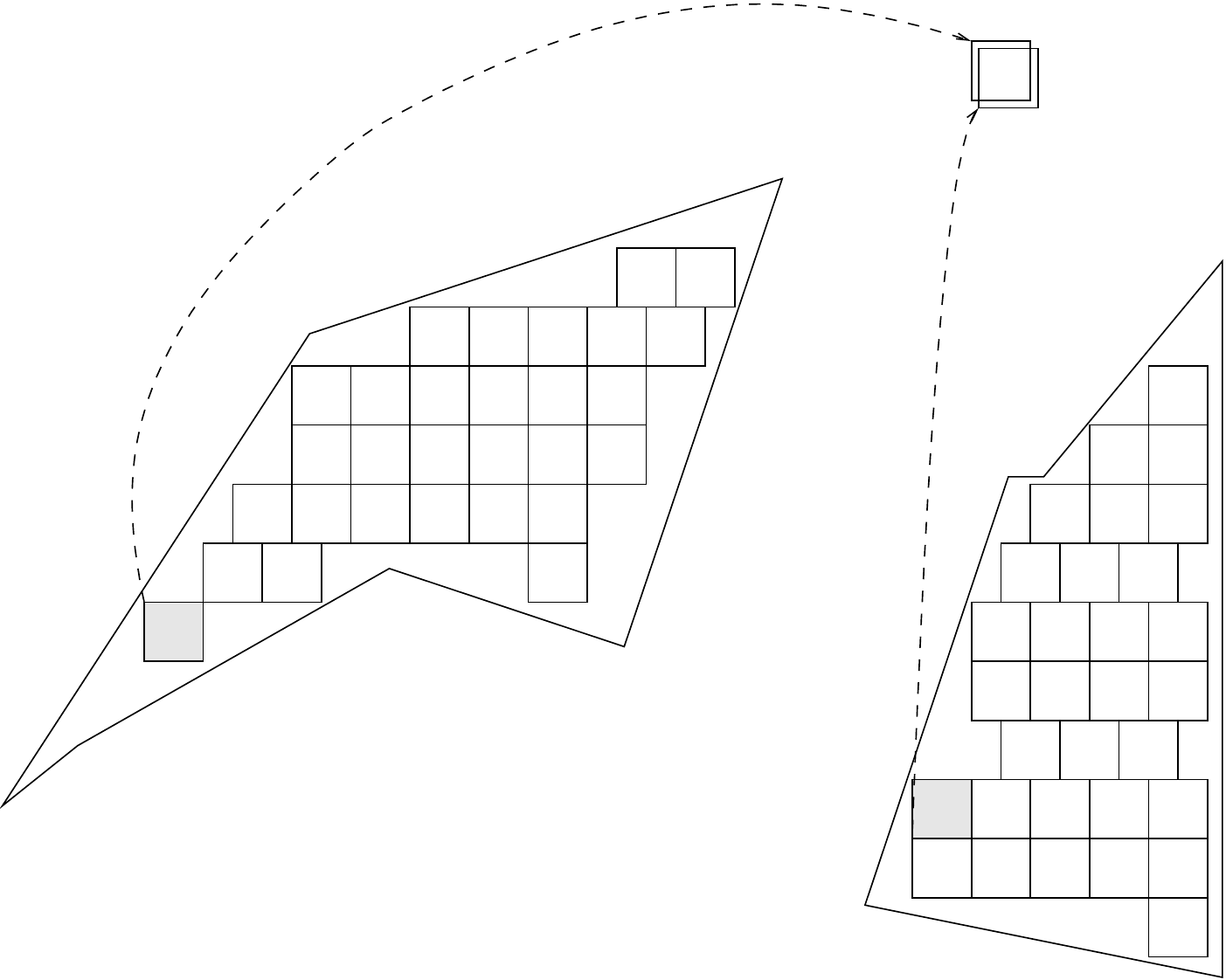_t}
\caption{How to rearrange the fundamental domains of two lattices so that they agree almost everywhere \cite[Theorem 2]{kolountzakis1997multi}. One breaks up the two domains into smaller and smaller parts, then moves each by a vector in its own lattice so that they agree almost completely.}
\label{fig:multi-construction}
\end{figure}
In this case, $|E|=\vol(\Lambda_i)=1$. So then one can take $f=\one_E$, which tiles with all $\Lambda_i$, $i=1,2, ..., N$, with $|\supp f|=|E|=1$.

Motivated by the previous observation, but now dropping the equal volume assumption, we ask the following:

\qq{
Consider the lattices $\Lambda_1, \Lambda_2, ..., \Lambda_N$, with $\frac{1}{2}\leq \vol(\Lambda_i)\leq 2$. Is there a function $f$ that tiles with all $\Lambda_i$, such that
	\[|\supp f|=o(N^d)?
	\]
}

\qq{
In the case when $\Lambda_1, \ldots, \Lambda_N$ all have volume 1 and satisfy some sort of genericity condition, such as $\Lambda_1^*+\cdots\Lambda_N^*$ being a direct sum, as in \cite[Theorem 2]{kolountzakis1997multi}, can the common fundamental domain of the $\Lambda_i$ be bounded? In the construction of \cite[Theorem 2]{kolountzakis1997multi} the unboundedness is unavoidable, but is it in the nature of things?
}

\mysection{Small length of the support in $d=1$.}
In the simplest case in dimension $d=1$, and for two lattices only, a basic question is to ask if the convolution \eqref{convolution-soft} is best in terms of the length of the support. Here we can give \cite{kolountzakis2021functions} a simple lower bound assuming a nonnegative function.

\begin{theorem}\label{th:1d-lb}
Suppose the nonnegative $f:\RR\to\RR^{\ge 0}$ is measurable and tiles with both $\Lambda_1 = \ZZ$ and with $\Lambda_2 = \alpha \ZZ$, where $\alpha \in (0, 1)$:
\beql{tiling-with-two}
\sum_{n \in \ZZ} f(x-n) = 1,\ \ \ \sum_{n \in \ZZ} f(x-n\alpha) = \frac{1}{\alpha},\ \ \text{for almost every $x \in \RR$}.
\eeq
Then
\beql{1d-lb}
\Abs{\supp f} \ge \Ceil{\frac{1}{\alpha}} \alpha \ge 2\alpha.
\eeq
\end{theorem}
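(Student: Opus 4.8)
The plan is to extract a pointwise $L^\infty$ bound on $f$ from the first tiling and then run a counting argument against the second. First I would observe that the $\ZZ$-tiling $\sum_{n\in\ZZ}f(x-n)=1$ (for a.e.\ $x$), together with $f\ge 0$, forces $f\le 1$ almost everywhere: for a.e.\ $x\in[0,1)$ the nonnegative numbers $f(x-n)$, $n\in\ZZ$, sum to $1$, so each of them is $\le 1$, and as $(x,n)$ ranges over $[0,1)\times\ZZ$ the point $x-n$ ranges over a.e.\ point of $\RR$ (the exceptional set, being a union of integer translates of a null set, is null).

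Next I would use the second tiling to bound from below the number of translates of the support meeting each coset of $\alpha\ZZ$. Put $S=\Set{x:f(x)\neq 0}$. For a.e.\ $x$ we have $\sum_{n\in\ZZ}f(x-n\alpha)=1/\alpha$; since every summand lies in $[0,1]$, if exactly $m$ of them are nonzero then $1/\alpha\le m$, hence $m\ge\Ceil{1/\alpha}$. Equivalently, the integer-valued counting function $x\mapsto \#\big(S\cap(x+\alpha\ZZ)\big)$ is $\ge\Ceil{1/\alpha}$ for a.e.\ $x$.

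Finally I would integrate this over one fundamental interval $[0,\alpha)$ of $\alpha\ZZ$. Decomposing $\RR=\bigsqcup_{n\in\ZZ}\big([0,\alpha)+n\alpha\big)$ and applying Tonelli to $\one_S$ gives
\[
\Abs{S}=\int_0^\alpha \#\big(S\cap(x+\alpha\ZZ)\big)\,dx\ \ge\ \Ceil{\frac1\alpha}\alpha ,
\]
and since $\alpha<1$ we have $1/\alpha>1$, so $\Ceil{1/\alpha}\ge 2$ and the right-hand side is $\ge 2\alpha$. (If one reads $\supp f$ as the closed support rather than $\Set{f\neq 0}$, the inequality only improves, since the closed support has measure at least $\Abs{S}$.)

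I do not expect a genuinely hard step: the entire content is the interaction between the two tiling levels ($1$ for $\ZZ$ and $1/\alpha>1$ for $\alpha\ZZ$) and the nonnegativity of $f$. The only points that need care are the measure-zero bookkeeping — upgrading ``the tiling identity holds for a.e.\ $x$'' to ``$f\le 1$ a.e.'' and to ``the coset count is $\ge\Ceil{1/\alpha}$ for a.e.\ $x$'' — which is routinely handled by Fubini/Tonelli on the product decomposition $\RR\cong[0,\alpha)\times\alpha\ZZ$.
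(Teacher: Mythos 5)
Your proof is correct and follows essentially the same route as the paper: deduce $f\le 1$ a.e.\ from the $\ZZ$-tiling, conclude that a.e.\ coset of $\alpha\ZZ$ meets $\Set{f\neq 0}$ in at least $\Ceil{1/\alpha}$ points, and integrate over the fundamental interval $[0,\alpha)$. Your Tonelli bookkeeping just makes explicit what the paper leaves implicit.
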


\begin{remark}
If we assume the first equation in \eqref{tiling-with-two} then the constant in the second equation is forced to be $1/\alpha$. This is because $\int f = 1$ (from the first equation), so repeating $f$ at a set of translates of density $1/\alpha$ will give a constant (assuming it tiles) at that level.
\end{remark}

\begin{remark}
Notice that if $\alpha$ is just a little less than 1 then \eqref{1d-lb} gives a lower bound of $2\alpha$, which shows that the convolution $\one_{[0, 1]}*\one_{[0, \alpha]}$ is almost optimal in this case, having support of size $1+\alpha$.

But if, on the other hand, $\alpha$ is just over $1/2$ then the lower bound is just over 1 but the convolution upper bound is just over $3/2$, a considerable gap.
\end{remark}

\begin{proof}
From the first equation in \eqref{tiling-with-two} it follows that $f(x) \le 1$ for almost every $x$. For the second equation to be true it therefore follows that for almost every $x \in \RR$ there are at least $\Ceil{1/\alpha}$ different values of $n \in \ZZ$ such that $f(x-n\alpha)>0$.
Using this for almost all $x \in [0, \alpha)$ (which ensures that for different $x$ the locations $x-n\alpha$ are also different) gives \eqref{1d-lb}.

\end{proof}

\qq{
What is the least possible length of the support of $f$ for a nonnegative $f$ that tiles with both $\ZZ$ and $\alpha\ZZ$?
}

\mysection{Very small diameter of the support with relations among the lattices.}
If we have $N$ lattices
$$
\Lambda_1, \ldots, \Lambda_N \subseteq \RR^d
$$
we can find a function that tiles with them all, namely the function $f$ in \eqref{convolution}. If our lattices are assumed to each have a fundamental domain bounded by $\sim 1$ then $\diam\supp f = O(N)$, and this cannot be improved for functions $f$ arising from \eqref{convolution}. We show here \cite{kolountzakis2021functions} that we can choose the lattices $\Lambda_j$ so that a common tiling function exists which is much more tight than that, tighter even than what Theorem \ref{kw-lb} imposes. Of course our lattices will not satisfy the genericity condition \eqref{no-intersection} of Theorem \ref{kw-lb}, but will satisfy a lot of relations (their intersection will be a large lattice, in terms of density).

Fix a large prime $p$ and consider the group $\ZZ_p^d$. Any nonzero element $g$ of this group generates a cyclic subgroup of order $p$. It follows that $\ZZ_p^d$ has
$$
\frac{p^d-1}{p-1} \sim p^{d-1} =: N
$$
different cyclic subgroups. For each such subgroup $G$, which we now view as a subset of $\Set{0, 1, \ldots, p-1}^d$, consider the lattice
$$
\Lambda_G = (p\ZZ)^d + G.
$$
\begin{figure}[h]
\includegraphics[width=0.3\linewidth]{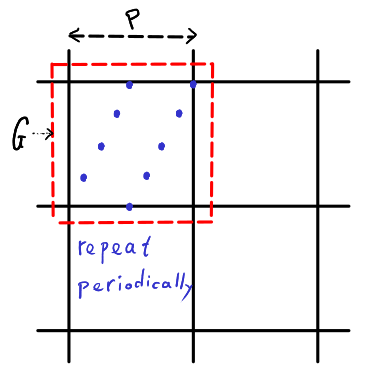}
\caption{How we construct the $(p\ZZ)^d$-periodic set $\Lambda_G$ from the subgroup $G$ of $\ZZ_p^d$}
\end{figure}
This contains the lattice $\Lambda = (p\ZZ)^d$ and has volume
$$
\vol \Lambda_G = \frac{\vol (p\ZZ)^d}{\Abs{G}} = p^{d-1} = N.
$$
The function $f = \one_{[0, p)^d}$, $[0,p)^d$ being a fundamental domain of $\Lambda$, tiles with $\Lambda$ and, therefore, with any larger group, so $f$ is a common tile of all $\Lambda_G$.

In order to make the volume of the $\Lambda_G$ equal to 1 we shrink everything by $N^{1/d}$:
$$
\Lambda_G' = N^{-1/d} \Lambda_G,\ \ \ f'(x) = f(N^{1/d} x).
$$
So we have $\sim N$ lattices $\Lambda'_G$ of volume 1 and a common tile $f'$ for them with
$$
\diam \supp f' = \diam\supp f \cdot N^{-1/d} = \sqrt{d} \, p N^{-1/d} = \sqrt{d}\, N^{\frac{1}{d-1}-\frac{1}{d}} = \sqrt{d} \, N^{\frac{1}{d(d-1)}}.
$$

We have proved:
\begin{theorem}\label{th:many-relations}
In dimension $d\ge 2$ and for arbitrarily large $N$ we can find $N$ lattices of volume $1$ and a common tile $f$ for them with
$$
\diam \supp f = O_d\left( N^{\frac{1}{d(d-1)}} \right),
$$
and, consequently, with
\beql{many-relations-estimate}
\Abs{\supp f} = O_d\left(N^{\frac{1}{d-1}}\right).
\eeq
\end{theorem}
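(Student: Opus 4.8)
The plan is to formalize the construction already sketched before the statement; essentially everything is in place, and the only thing needing separate attention is one elementary fact about tiling by a sublattice. First I would fix a large prime $p$ and regard $\ZZ_p^d$ as a vector space over the field with $p$ elements, so that its cyclic subgroups are exactly its one-dimensional subspaces, of which there are $(p^d-1)/(p-1)=1+p+\cdots+p^{d-1}\ge p^{d-1}=:N$; I would then select any $N$ of them, say $G_1,\ldots,G_N$, each of order $p$. For each $j$ I would let $\Lambda_j\subseteq\ZZ^d$ be the full preimage of $G_j$ under the reduction map $\ZZ^d\to\ZZ_p^d$; since this map is a homomorphism, $\Lambda_j$ is a sublattice of $\ZZ^d$ containing $(p\ZZ)^d$, with $[\ZZ^d:\Lambda_j]=p^d/\Abs{G_j}=p^{d-1}=N$, hence $\vol\Lambda_j=N$. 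Distinct $G_j$ give distinct $\Lambda_j$ because $G_j=\Lambda_j\bmod p$, so I genuinely obtain $N$ lattices of equal volume (and, since $p$ ranges over all primes, $N$ ranges over an infinite set, so this yields examples for arbitrarily large $N$).

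Next I would record the one auxiliary fact: if a nonnegative $g$ tiles with a lattice $M$ and $M\subseteq M'$ with $[M':M]<\infty$, then $g$ tiles with $M'$ as well. This follows by writing $M'$ as a disjoint union $\bigcup_i(M+s_i)$ over coset representatives $s_1,\ldots,s_{[M':M]}$ and summing the identity $\sum_{m\in M}g(x-m)=c$ over the shifts $s_i$, which yields $\sum_{m'\in M'}g(x-m')=c\,[M':M]$ for a.e.\ $x$. Applying this with $g=f:=\one_{[0,p)^d}$, which tiles with $(p\ZZ)^d$ at level $1$, and $M'=\Lambda_j$, shows that $f$ is a common tile for all of $\Lambda_1,\ldots,\Lambda_N$, with $\diam\supp f=\diam[0,p)^d=\sqrt d\,p$.

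Finally I would normalize the volumes by a dilation: set $\Lambda_j':=N^{-1/d}\Lambda_j$, so $\vol\Lambda_j'=N^{-1}\vol\Lambda_j=1$, and $f'(x):=f(N^{1/d}x)$, which remains a common tile of the $\Lambda_j'$ because the relation $\sum_{\lambda\in\Lambda}f(x-\lambda)=c$ is preserved under a simultaneous linear rescaling of the function and the lattice. Then $\supp f'=N^{-1/d}[0,p)^d$, and since $p=N^{1/(d-1)}$ and $\tfrac1{d-1}-\tfrac1d=\tfrac1{d(d-1)}$, this gives $\diam\supp f'=\sqrt d\,p\,N^{-1/d}=\sqrt d\,N^{\frac1{d-1}-\frac1d}=\sqrt d\,N^{1/(d(d-1))}$. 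The volume bound then follows either by enclosing $\supp f'$ in a ball of that radius, giving $\Abs{\supp f'}=O_d\!\left(N^{d\cdot\frac1{d(d-1)}}\right)=O_d\!\left(N^{1/(d-1)}\right)$, or directly from $\Abs{\supp f'}=N^{-1}p^d=N^{1/(d-1)}$. I do not expect any genuine obstacle: the only steps needing a little care are the sublattice-tiling observation and the verification that the $N$ lattices $\Lambda_j$ are distinct and of equal volume, after which the exponent arithmetic is routine.
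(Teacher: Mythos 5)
Your proposal is correct and follows essentially the same construction as the paper: the lattices $\Lambda_G=(p\ZZ)^d+G$ over cyclic subgroups $G$ of $\ZZ_p^d$, the common tile $\one_{[0,p)^d}$, and the rescaling by $N^{-1/d}$. The only additions are the explicit verification of the sublattice-to-superlattice tiling step and of the distinctness and equal volume of the lattices, which the paper leaves implicit.
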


\qq{
Derive a lower bound for $\diam \supp f$, for $f$ tiling with $\Lambda_1, \ldots, \Lambda_N \subseteq \RR^d$ and with $f \ge 0$ (or just $\int f > 0$) under no algebraic conditions for the lattices $\Lambda_j$, assuming only that $\vol\Lambda_j \sim 1$.
}

\qq{
In Theorem \ref{th:many-relations} we have used the cyclic subgroups of $\ZZ_p^d$ because they are easier to count. However the same argument could be carried out using a larger class of subgroups, perhaps all of them. What is the estimate that can be achieved this way to replace \eqref{many-relations-estimate}?
}

\mysection{Almost matching upper and lower bounds for the diameter, $d=1$.}
The construction that we used to prove Theorem \ref{th:many-relations} gives nothing in dimension $d=1$. Yet, we can prove \cite{kolountzakis2021functions} that, if we allow relations among the lattices, we can achieve $\diam\supp f = o(N)$ in dimension 1 as well.

Let us start by defining
$$
\lambda_j = \frac{1}{N+j},\ \ \ \Lambda_j = \lambda_j\ZZ, \ \ \ (j=1, 2, \ldots, N).
$$
We will first construct a function $f$ which tiles with all the $\Lambda_j$, $j=1, 2, \ldots, N$, such that
$$
\diam\supp f = o(1).
$$
The Fourier transform of such an $f$ must vanish on the dual lattices
$$
\Lambda_j^* = \lambda_j^{-1}\ZZ = (N+j)\ZZ,\ \ \ (j = 1, 2, \ldots, N)
$$
except at 0.
Write
$$
U = \bigcup_{j=1}^N (N+j)\ZZ \ \setminus\Set{0}.
$$
By a result of Erd\H os \cite{erdos1935note} $U$, the set of integers which are divisible by one of the integers in $\Set{N+1, N+2, \ldots, 2N}$, has density tending to 0 with $N$. Tenenbaum \cite{tenenbaum1980lois} has given the estimate that this density is at most
\beql{ten}
\frac{1}{\log^{\delta - o(1)} N},
\eeq
where $\delta = 0.086071\cdots$ is an explicit constant.

It is an important result of Beurling \cite{beurling1989collected} that if $\Lambda$ is a uniformly discrete set of real numbers of upper density $\rho$ then for any $\epsilon>0$ we can find a continuous function $f$, not identically zero, supported by the interval $[0, \rho+\epsilon]$ such that $\ft{f}(\lambda) = 0$ for all $\lambda \in \Lambda$. We can even ask that $\ft{f}(0) = 1$ if $0 \notin \Lambda$. By Tenenbaum's estimate \eqref{ten} we can take $\rho = \log^{-\delta+o(1)}N$ and the set $U$, being a set of integers and thus uniformly discrete, satisfies the assumptions of Beurling's theorem, so there is a function $f$ supported in the interval $[0, \log^{-\delta+o(1)}N]$, with integral 1, such that $\ft{f} = 0$ on $U$. It follows that $f$ tiles with all $\Lambda_j$.

We now scale by a factor of $N$
$$
f'(x) = f(x / N),\ \ \ \Lambda_j' = N \Lambda_j,\ \ \ \diam\supp f' = O(N \log^{-\delta+o(1)}N)
$$
and obtain the first half of the following theorem.
\begin{theorem}\label{th:many-relations-1d}
We can find $N$ lattices $\Lambda_j \subseteq \RR$ of with $\vol\Lambda_j \sim 1$ and a function $f$ with $\int f > 0$ and supported in an interval of length
$$
\frac{N}{\log^{\delta-o(1)}N)}
$$
which tiles with all $\Lambda_j$.

Furthermore, for any $\epsilon > 0$ any such function $f$ must have
$$
\diam\supp f \gtrsim_\epsilon N^{1-\epsilon}.
$$
\end{theorem}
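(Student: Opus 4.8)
The plan is to turn the tiling hypotheses into many prescribed zeros of $\ft f$ and then play those zeros against the constraint that a function supported in a short interval has a Fourier transform of small exponential type, which cannot vanish too densely. With the lattices as in the theorem, $\Lambda_j=\tfrac{N}{N+j}\ZZ$, tiling with $\Lambda_j$ forces, via \eqref{tiling-ft-support} together with Poisson summation, $\ft f$ to vanish on $\Lambda_j^*\setminus\Set 0=\tfrac{N+j}{N}\ZZ\setminus\Set 0$; taking the union over $j$, $\ft f$ vanishes on $\tfrac1N U$, where $U=\bigcup_{j=1}^{N}(N+j)\ZZ\setminus\Set 0$, while $\ft f(0)=\int f>0$. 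If $\diam\supp f=\infty$ there is nothing to prove, so assume $L:=\diam\supp f<\infty$; then $f$ is a compactly supported integrable function (its support lies in an interval of length $L$), and after translating $f$ (which only multiplies $\ft f$ by a unimodular factor and does not move its zeros) the Paley--Wiener theorem shows that $\ft f$ extends to an entire function of exponential type $\le\pi L$, bounded on the real axis and not identically zero.

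The next step is the standard quantitative form of Jensen's formula: an entire function of exponential type $\tau$ that is bounded on $\RR$ and does not vanish at $0$ has at most $C\tau T+O(1)$ zeros in $[-T,T]$, for an absolute constant $C$ and all large $T$. (This is the precise statement behind ``a band-limited function cannot vanish too densely'' that underlies Beurling's theorem quoted above.) Applying it to $\ft f$ with $\tau=\pi L$ bounds the number of points of $\tfrac1N U$ in $[-T,T]$ by $C'LT+O(1)$ for an absolute $C'$. On the other hand $U\cup\Set 0$ is a finite union of arithmetic progressions, hence periodic, so $U$ has a well-defined density $\rho_N:=\dens U>0$, and scaling gives $\#\big(\tfrac1N U\cap[-T,T]\big)=\#\big(U\cap[-NT,NT]\big)\sim 2NT\rho_N$ as $T\to\infty$. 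Dividing the inequality by $2T$ and letting $T\to\infty$ yields
$$
\diam\supp f\ \ge\ \frac{N\rho_N}{C'}.
$$

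It remains to bound $\rho_N$ from below, and this is the one step that needs an idea: the trivial bound $\rho_N\ge\tfrac1{N+1}$ (from $U\supseteq(N+1)\ZZ$) is useless, so one keeps only the \emph{prime} moduli in $(N,2N]$, which have controlled overlaps. If $P$ denotes the number of primes in $(N,2N]$, then $p\ZZ\subseteq U$ for each such $p$, while for distinct primes $p\ne q$ in this range $p\ZZ\cap q\ZZ=pq\ZZ$ has density $\le N^{-2}$; by Chebyshev's estimates (or the prime number theorem) $P\asymp N/\log N$. The Bonferroni inequalities then give
$$
\rho_N\ \ge\ \sum_{p}\frac1p-\sum_{p<q}\frac1{pq}\ \ge\ \frac{P}{2N}-\frac{P^2}{2N^2}\ \gtrsim\ \frac1{\log N},
$$
the sums running over primes $p$ (and pairs $p<q$) in $(N,2N]$. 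Combining with the previous display, $\diam\supp f\gtrsim N/\log N$, which is $\gtrsim_\epsilon N^{1-\epsilon}$ for every fixed $\epsilon>0$ once $N$ is large; this is the stated bound. I regard the only real subtlety as the passage $T\to\infty$: both the zero count and the density of $U$ are asymptotic statements (in $T$, respectively in the window length, the latter with a threshold that can be enormous in $N$), but each becomes valid for all sufficiently long windows, so they can be combined. Finally, if one instead feeds in the sharp lower bound $\rho_N\asymp(\log N)^{-\delta}(\log\log N)^{-3/2}$ of Erd\H os and Ford and the sharp constant in the zero-counting inequality, the same argument gives $\diam\supp f\gtrsim N(\log N)^{-\delta}(\log\log N)^{-3/2}$, essentially matching the upper bound of the first half of the theorem.
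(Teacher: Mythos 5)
Your write-up proves only the second half of the theorem. The first half is an existence statement: one must actually choose the lattices and exhibit a nonzero $f$ with $\int f>0$, supported in an interval of length $N\log^{-\delta+o(1)}N$, whose Fourier transform vanishes on $\bigcup_j \Lambda_j^*\setminus\Set{0}=\tfrac1N U$. You cite the Erd\H os--Ford asymptotics for $\dens U$ at the end, but an upper bound on the density of the required zero set does not by itself produce a tile; one needs a converse construction, and the paper gets it from Beurling's theorem: for a uniformly discrete $\Lambda\subseteq\RR$ of upper density $\rho$ and any $\epsilon>0$ there is a nonzero continuous $f$ supported in $[0,\rho+\epsilon]$ with $\ft{f}=0$ on $\Lambda$ and $\ft{f}(0)=1$. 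Applying this to $U$ (uniformly discrete, being a set of integers) with $\rho=\log^{-\delta+o(1)}N$ from Tenenbaum's estimate, and then rescaling by $N$, gives the claimed support length. This half, which carries the nontrivial analytic input, is simply missing from your argument; as it stands you have proved only that \emph{if} such an $f$ exists for these lattices then its support is long, not that one exists with support of the stated length.

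The lower bound you do prove is correct, and your route is genuinely different from the paper's. The paper invokes \cite[Theorem 1.1]{gilboa2014union}, a combinatorial estimate showing that the union of the $N$ arithmetic progressions $\lambda_j^{-1}\ZZ$, $\lambda_j\in[1,2]$, contains at least $c_\epsilon N^{2-\epsilon}$ points of $[0,2N]$, and then applies Jensen; this works for \emph{arbitrary} such lattices. You instead exploit the specific moduli $N+1,\dots,2N$: restricting to the prime moduli in $(N,2N]$ and applying Bonferroni gives $\dens U\gtrsim 1/\log N$ by elementary means, and combined with the standard exponential-type zero count this yields $\diam\supp f\gtrsim N/\log N$, which is in fact stronger than the stated $N^{1-\epsilon}$ for the particular lattices of the theorem. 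The trade-off is generality (your bound uses the arithmetic of the chosen moduli, the paper's does not), but for the theorem as stated your argument suffices and is self-contained; your handling of the two $T\to\infty$ limits (zero count versus density of the periodic set $U$) is also correct.
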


Arguing similarly we can also prove the lower bound for $\diam\supp f$ in Theorem \ref{th:many-relations-1d}. If we assume that $f$ tiles with all $\Lambda_j = \lambda_j \ZZ$, with, say, $1 \le \lambda_j \le 2$, $j=1, 2, \ldots, N$, then $\ft{f}$ vanishes on
$$
\bigcup_{j=1}^N \lambda_j^{-1}\ZZ\ \ \setminus \Set{0}.
$$
If this set is large then Jensen's formula implies that $\diam\supp f$ is also large.
It was proved in \cite[Theorem 1.1, special case $\ell= n$]{gilboa2014union} that, for any $\epsilon > 0$, the above union of arithmetic progressions contains at least $c_\epsilon N^{2-\epsilon}$ points in $[0, 2N]$. By Jensen's formula then we have $\diam\supp f \gtrsim_\epsilon N^{1-\epsilon}$ and this completes the proof of Theorem \ref{th:many-relations-1d}.

\qq{
Can we ensure $f \ge 0$ in the first half of Theorem \ref{th:many-relations-1d}?
}

\bibliographystyle{amsplain}
\bibliography{lit}

\end{document}